\newcommand\dt{\Delta t}
\newcommand\dd{\mathrm{d}}
\newcommand\pp{\partial}
\begin{document}

% \title{On variational numerical schemes for general reaction-diffusion systems: An energetic variational approach}
\title{On a positive-preserving, energy-stable numerical scheme to mass-action kinetics with detailed balance}

% \author{Chun Liu, Cheng Wang, Yiwei Wang}

\author{Chun Liu\thanks{Department of Applied Mathematics, Illinois Institute of Technology, Chicago, IL 60616, USA. (cliu124@iit.edu)}  \and Cheng Wang\thanks{Mathematics Department, University of Massachusetts--Dartmouth, North Dartmouth, MA 02747, USA,
(cwang1@umassd.edu).}\and Yiwei Wang\thanks{Corresponding author. Department of Mathematics, University of California--Riverside, Riverside, CA 92521, USA. (yiweiw@ucr.edu). }}

%\author[iit]{Chun Liu}
%\ead{cliu124@iit.edu}
%\author[iit]{Yiwei Wang}
%\ead{ywang487@iit.edu}

%\address[iit]{Department of Applied Mathematics, Illinois Institute of Technology, Chicago, IL 60616, USA}

%\date{}

\maketitle

\begin{abstract}
In this paper, we provide a detailed theoretical analysis of the numerical scheme introduced in \emph{J. Comput. Phys. 436 (2021) 110253} for the reaction kinetics of a class of chemical reaction network that satisfies detailed balance condition. In contrast to conventional numerical approximations, which are typically constructed based on ordinary differential equations (ODEs) for the concentrations of all involved species, the scheme is developed using the equations of reaction trajectories, which can be viewed as a generalized gradient flow of a physically relevant free energy. The unique solvability, positivity-preserving, and energy-stable properties are proved for the general case involving multiple reactions, under a mild condition on the stoichiometric matrix.
\end{abstract}

\section{Introduction}

%% AIM low

% Reaction network models have wide applications in biology. Examples include......
Chemical reactions played an important role in many physical, chemical, and biological processes \cite{anderson2015lyapunov, duke1999molecular, keener1998mathematical, pearson1993complex}. Mathematically, the reaction kinetics are often described by a system of nonlinear ODEs in terms of concentrations of all involved species \cite{keener1998mathematical}.

Consider a chemical reaction network (CRN) consisting of $N$ species ${ X_1, X_2, \ldots X_N }$ and $M$ reversible chemical reactions:
\begin{equation}\label{RN1}
\alpha_{1}^{l} X_1 + \alpha_{2}^{l}X_2 + \ldots \alpha_{N}^{l} X_N \ce{<=>} \beta_{1}^{l} X_1 + \beta_{2}^{l}X_2 + \ldots \beta_{N}^{l} X_N, \quad l = 1, \ldots, M \ ,
\end{equation}
where  $\alpha^l_i, \beta^l_i \geq 0$ are stoichiometric coefficients for $l$-th reaction. The reaction kinetics is often formulated as \cite{formaggia2011positivity}
\begin{equation}\label{Reaction_ODE}
\frac{\dd {\bm c}(t)}{\dd t} = {\sf S} {\bm r}({\bm c}).
\end{equation}
Here, ${\bm c}(t) = (c_1, c_2, \ldots, c_N)^{\rm T} \in \mathbb{R}^N_{+}$ represents the concentrations of all involved species,
${\bm r}({\bm c}) = (r_1 ({\bm c}), r_2 ({\bm c}), \ldots r_M ({\bm c})) \in \mathbb{R}^M$ denotes the reaction rates of the $M$ reactions, and ${\sf S} \in \mathbb{R}^{N \times M}$ is the stoichiometric matrix, where each element $S_{il}$ is defined as $\beta^l_i - \alpha^l_i$. It is often assumed that $N > M$ and $\text{rank}({\sf S}) = M$ \cite{formaggia2011positivity}. The latter assumption indicates that the $M$ reactions are linearly independent in this reaction network. Under this assumption, we have
%It is noticed that if ${\bm \gamma} \in \mathrm{Ker} \mathsf{S}^{\rm T}$, we always have
\begin{equation}
 \frac{\dd}{\dd t} ({\bm \gamma} \cdot {\bm c}) = {\bm \gamma} \cdot ({\sf S} {\bm r}) = ({\sf S}^{\rm T} {\bm \gamma}) \cdot {\bm c} = 0, \quad {\bm \gamma} \in \mathrm{Ker} \mathsf{S}^{\rm T} , 
\end{equation}
which indicates that the reaction kinetics (\ref{Reaction_ODE}) employs $N - \mathrm{rank}( {\sf S})$ conserved quantities.

The reaction rate for the $l$-th reaction,  $r_l({\bm c})$, is often expressed as the difference between the forward and backward reaction rates, denoted as $r_l^+ ({\bm c} )$ and $r_l^- ({\bm c})$, i.e., $r_l({\bm c}) = r_l^+ ({\bm c} ) - r_l^- ({\bm c} )$. These rates $r_l^{\pm} ({\bm c} )$ are commonly specified by so-called the law of mass action (LMA) \cite{keener1998mathematical}. The empirical law states that the reaction rate is directly proportional to the product of the concentrations of the reactants, i.e., 
\begin{equation}\label{LMA_1}
r_l^+ (\bm{c}) = k_{l}^+ {\bm c}^{{\bm \alpha}^l} \ , \quad r_l^- ({\bm c}) =  k_{l}^- {\bm c}^{{\bm \beta}^l} \ ,
\end{equation}
where ${\bm c}^{{\bm \alpha}^l}  = \prod_{i=1}^N c_i^{\alpha_i^l}, ~{\bm c}^{{\bm \beta}^l}  = \prod_{i=1}^N c_i^{\beta_i^l}.$  Consequently, the reaction kinetics equation \eqref{Reaction_ODE} is generally a highly nonlinear ODE system.

At a numerical level, solving the reaction kinetics equation (\ref{Reaction_ODE}) is often challenge, primarily due to the stiffness and nonlinearity \cite{formaggia2011positivity}. Moreover, many standard ODE solvers may fail to preserve the basic physical properties of the original system, such as the positivity of ${\bm c}$ and the intrinsic conservation laws. Although there has been a long history of developing robust numerical methods for reaction kinetics \cite{bruggeman2007second, burchard2003high,formaggia2011positivity, sandu2001positive} to preserve the positivity, as well as the conservation property, a significantly small step-size is often needed for most existing methods.  

% The concentration of all species is denoted by  and it satisfies the reaction kinetics equation:
It has been well-known that if the reaction kinetics (\ref{Reaction_ODE}) with LMA (\ref{LMA_1})
satisfies the {\bf detailed balance} condition, i.e., there exists a positive equilibrium point ${\bm c}^{\infty} \in \mathbb{R}^N_{+}$, such that
\begin{equation}\label{DB_C}
k_{l}^+ ({\bm c}^{\infty} )^{{\bm \alpha}^l} =  k_{l}^- ({\bm c}^{\infty})^{{\bm \beta}^l}, 
\end{equation}
the reaction kinetics (\ref{Reaction_ODE}) admits a Lyapunov function or free energy \cite{anderson2015lyapunov, mielke2011gradient, wei1962axiomatic}, given by
\begin{equation}\label{free_energy_c}
\mathcal{F}[c] = \sum_{i=1}^N c_i (\ln (c_i / c_i^{\infty}) - 1)\ .
\end{equation}

Under the detailed balance condition (\ref{DB_C}), it was shown in \cite{wang2020field} that the system can be viewed as a generalized gradient flow of the reaction trajectory ${\bm R} \in \mathbb{R}^M$ \cite{oster1974chemical, wang2020field}, which accounts for the ``number'' of forward chemical reactions that have occurred by time $t$, with respect to the free energy (\ref{free_energy_c}). More precisely, for the general reaction network (\ref{RN1}), one can introduce a reaction trajectory ${\bm R}(t) \in \mathbb{R}^M$, and ${\bm c}(t)$ will be determined by the kinematics 
\begin{equation}
{\bm c}(t) = {\bm c}(R(t)) = {\bm c}_0 + {\sf S} {\bm R}(t),
\end{equation}
where  %$S_{il} = \beta^l_i - \alpha^l_i$ is the stoichiometric coefficients ({\color{red}Any constraint? }) 
${\sf S} = (S_{il}) \in \mathbb{R}^{N \times M} $ is the stoichiometric matrix and ${\bm c}_0$ is the initial concentration. % It not clear the underlying variational structure of (\ref{LMA}). However, 
% If the reaction kinetics satisfies the detailed balance condition, i.e.,
Subsequently, the reaction kinetics with LMA (\ref{LMA_1}) can be viewed as a generalized gradient flow of ${\bm R}$, satisfying the energy-dissipation law
\begin{equation}\label{ED_R}
\frac{\dd}{\dd t} \mathcal{F}[{\bm c}({\bm R})] = - D({\bm R}, \dot{\bm R}), \quad D({\bm R}, \dot{\bm R}) =  \sum_{l=1}^M \dot{R}_l \ln \left( \frac{\dot{R_l}}{k_l^{-}  ({\bm c}({\bm R}))^{{\bm \beta}^l}} + 1 \right)\ , % \quad  \mathcal{F}[{\bm c}] = \sum_{i=1}^N c_i (\ln (c_i / c_i^{\infty}) - 1),
\end{equation}
where $D({\bm R}, \dot{\bm R})$ is the rate of energy dissipation. 
%where ${\bm c}^{\infty} \in \mathbb{R}_{+}^N$ is an equilibrium of the system satisfying a detailed balance condition $k_{l}^+ ({\bm c}^{\infty} )^{{\bm \alpha}^l} =  k_{l}^- ({\bm c}^{\infty})^{{\bm \beta}^l}$.
Indeed, by a standard variational procedure, one can show that $R_l(t)$ satisfies an nonlinear ODE
\begin{equation}\label{Eq_R}
  \ln \left( \frac{\dot{R_l}}{k_l^{-}  ({\bm c}({\bm R}))^{{\bm \beta}^l}} + 1 \right) = - \frac{\delta \mathcal{F}}{\delta R_l}, \quad \frac{\delta \mathcal{F}}{\delta R_l} = \sum_{i=1}^N S_{li} \mu_i, \quad l = 1, 2, \ldots M.
\end{equation}
where $\mu_i = \frac{\delta \mathcal{F}}{\delta c_i}$ is the chemical potential of $i$-the species, $\frac{\delta \mathcal{F}}{\delta R_l}$ is known as the chemical affinity of $l$-th chemical reaction \cite{kondepudi2014modern}. Using (\ref{Eq_R}), one can rewrite (\ref{Eq_R}) as
\begin{equation}
\dot{R}_l = k_{l}^+ ({\bm c} ({\bm R}) )^{{\bm \alpha}^l} -  k_l^{-}  ({\bm c}({\bm R}))^{{\bm \beta}^l} \ ,
\end{equation} 
which is the LMA. Reaction kinetics beyond the law of mass action can be obtained by choosing the dissipation in (\ref{ED_R}) differently. We refer the interested readers to \cite{wang2020field} for more detailed discussions. It is worth mentioning that, unlike mechanical systems, $D({\bm R}, \dot{\bm R})$ is no longer quadratic  in terms of $\dot{\bm R}$ \cite{wang2020field}. However, near chemical equilibrium, i.e., $\dot{R}_l \approx 0,\ \forall l$, we have $D({\bm R}, \dot{\bm R}) \approx \sum_{l=1}^M |\dot{R}_l|^2 / (k_l^{-}  {\bm c}^{{\bm \beta}^l}).$ Hence, the  linear response assumption is still valid at the last stage of chemical reactions \cite{de2013non}.

The variational formulation (\ref{ED_R}) indicates that the reaction kinetics with the detailed balance condition can be viewed as a generalized  gradient flow of the reaction trajectory.  As a consequence, most numerical techniques for an $L^2-$gradient flow 
can be effectively applied to the reaction kinetics systems of this type. In \cite{liu2021structure}, the authors proposed a numerical scheme that discretizes the reaction trajectory equation (\ref{Eq_R}) directly (see Section 2 for details). The unique solvability, unconditional energy stability and the positivity-preserving property are established for the case with $M = 1$. The convergence analysis has been provided in~\cite{liu2022convergence}, and an extension to the second order numerical algorithm has been reported in~\cite{liu2022second}. 
% The variational formulation of reaction kinetics opens a new door to develop structure-preserving numerical schemes to reaction kinetics of this type \cite{liu2021structure, liu2022convergence, liu2022second}. 

% Meanwhile, extensive numerical tests have demonstrated that the proposed numerical scheme also works well for the case with multiple reactions.

Although numerical tests in \cite{liu2021structure,liu2022second} indicate the proposed numerical schemes work for cases with $M > 1$,  the theoretical analysis in \cite{liu2021structure,liu2022second} is limited to  the case of $M=1$. The aim of this short note is to provide a theoretical justification for the proposed numerical scheme, in particular in terms of the positivity-preserving property, unique solvability, and unconditional energy stability for the multiple reaction case, with $M > 1$. To clarify the idea, we only write down the details for the case with $M =2$ and $N = 4$, but the proof strategy works for the general case where $N \geq M$ and ${\rm rank} ({\sf S}) = M$. 

The remainder of this paper is organized as follows. The structure-preserving numerical scheme is recalled in Section~\ref{sec:numerical scheme}. The theoretical justification of positivity-preserving analysis and unique solvability is provided in Section~\ref{sec:positivity}. 

% The remaining parts of paper are organized as follows: in section 2, 

% of the reaction kinetics opens a new door to develop a positive-preserving and energy stable scheme for reaction kinetics using discrete EnVarA 
% The key idea is to construct a numerical discretization to the equation of reaction trajectory (\ref{Eq_R}), instead of discretizing the original reaction kinetics (\ref{LMA}). Although (\ref{Eq_R}) is a highly nonlinear ODE, it shares similar variational structure with an $L^2$-gradient flow. As a consequence, most of the numerical techniques for $L^2-$gradient flow can be applied to the reaction kinetics systems of this type. %

% Due to the nonlinearity and stiffness, it has been a challenge to simulate reaction kinetics. Some naive numerical discretizations may fail to preserve the positivity of concentrations, and the conservation property arising from the conservation of elements \cite{formaggia2011positivity}. 

% In classical chemical kinetics, $r_l({\bm c})$ is determined 

% in a fixed volumn $V$, is given by \cite{ge2016mesoscopic}

% \frac{\dd c_i}{\dd t}  = \sum_{l=1}^M S_{il} (r_l^+ ({\bm c} ) - r_l^- ({\bm c}))\ , % r_l({\bm c}), \quad r_l (\bm{c}) = k_{l}^+ {\bm c}^{{\bm \alpha}^l} -  k_{l}^- {\bm c}^{{\bm \beta}^l},

% $S_{il} = \beta^l_i - \alpha^l_i$ is stoichiometric coefficients, 

\section{The structure-preserving numerical discretization} \label{sec:numerical scheme} 

In this section, we briefly review the numerical scheme for the reaction kinetics, proposed in \cite{liu2021structure}. Instead of solving the reaction kinetics equation for the concentrations of all involved species (\ref{Reaction_ODE}), the numerical discretization is constructed on the reaction trajectory equation (\ref{Eq_R}), which can be viewed as a generalized gradient flow of ${\bm R}$. Similar to an $L^2$-gradient flow, a first-order semi-implicit discretization to
 (\ref{Eq_R}) can be written as
\begin{equation}\label{scheme_Mul_R}
 \ln \left(  \frac{R_l^{n+1} - R_l^n}{ k_l^{-}  ({\bm c}^n)^{{\bm \beta}^l} \dt} + 1 \right) = -  \frac{\delta \mathcal{F}}{\delta R_l} ( {\bm R}^{n+1})\ ,   \quad  1 \leq l \leq M , 
 % \sum_{i=1}^N S_i^l \mu_i(R^{n+1}).
\end{equation}
where ${\bm c}^n =  {\bm c}_0 + {\sf S} {\bm R}^n$ and $\dt$ is the temporal step-size. Although this equation is nonlinear with respect to $R_l^{n+1}$, its variational structure allows us to reformulate it as an optimization problem:
\begin{equation}\label{Minimization_R}
  \begin{aligned}
&  {\bm R}^{n+1} = \text{argmin}_{{\bm R} \in \mathcal{V}^n} J^n (\bm{R}), \quad J^n({\bm R}) =  d_R^2 ({\bm R}, {\bm R}^n)  + \mathcal{F} [ {\bm c} ({\bm R})] . 
  \end{aligned}
\end{equation}
Here, ${\bm c}({\bm R}) = {\bm c}_0 + {\sf S} {\bm R}$, $d_R^2({\bm R}, {\bm R^n})$ is a function measuring the difference between ${\bm R}$ and ${\bm R}^n$, defined as
\begin{equation}
d_R^2 ({\bm R}, {\bm R}^n) = \sum_{l=1}^M \left( (R_l - R^n_l + k_l^{-}  ({\bm c}^n)^{{\bm \beta}^l} \dt ) \ln \left(  \frac{R_l - R^n_l}{ k_l^{-}  ({\bm c}^n)^{{\bm \beta}^l}  \dt} + 1 \right) - (R_l -  R^n_l)) \right) , 
\end{equation}
and the admissible set is given by 
\begin{equation}
\mathcal{V}^n = \{  {\bm R} \in \mathbb{R}^M ~|~ {\bm c}_0 + {\sf S} {\bm R} \in \mathbb{R}^N_{+},~~ R_l - R^n_l + k_l^{-}  ({\bm c}^n)^{{\bm \beta}^l} \dt > 0,~~ J^n[{\bm R}^{n+1}] \leq J^n ({\bm R}^n \}.
\end{equation}
Of course, $\mathcal{V}^n$ is a non-empty set, since ${\bm R}^n \in \mathcal{V}^n$. Moreover, noticing that $d_R^2 ({\bm R}, {\bm R}^n) \rightarrow \infty$ if $\| {\bm R} \| \rightarrow \infty$ and $\mathcal{F} [c({\bm R})]$ is bounded from below, we conclude that $\mathcal{V}^n$ is a bounded subset of $\mathbb{R}^M$.
The set $\{ {\bm R} \in \mathbb{R}^M ~|~ {\bm c}_0 + {\sf S} {\bm R} \in \mathbb{R}^N_{+} \}$ is called  stoichiometric compatibility class for the initial condition $c_0$ \cite{anderson2015lyapunov}. It is straightforward to verify that 
\begin{equation}
\frac{\delta J^n ({\bm R})}{\delta R_l} =   \ln \left(  \frac{R_l - R_l^n}{ k_l^{-}  ({\bm c}^n)^{{\bm \beta}^l} \dt} + 1 \right) + \frac{\delta \mathcal{F}}{\delta R_l}, \quad \forall l . 
\end{equation}
Hence, a critical point of $J^n({\bm R})$ in $\mathcal{V}^n$ gives a solution of the nonlinear equation (\ref{scheme_Mul_R}). % In practice, the optimization problem can be solved by some line search based optimization algorithm to guarantee  

% The positivity of ${\bm c}^{n}$

\begin{remark}
  It is worth mentioning that an explicit treatment of ${\bm R}$ in the term $k_l^{-}  ({\bm c}({\bm R}))^{{\bm \beta}^l}$ turns out to be crucial, and it enables the definition of $d_R^2 ({\bm R}, {\bm R}^n)$. 
 Moreover, if $\frac{R_l - R^n_l}{ k_l^{-}  ({\bm c}^n)^{{\bm \beta}^l}  \dt}$ is small for any $l$, we observe the following Taylor expansion: 
\begin{equation}
  d_R^2 ({\bm R}, {\bm R}^n) \approx \sum_{l=1}^m \frac{1}{  k_l^{-}  ({\bm c}^n)^{{\bm \beta}^l}  \dt } (R_l - R_l^n)^2 + \mbox{higher order terms} . 
\end{equation}
Therefore, the numerical scheme is a natural generalization for the minimizing movement scheme for an $L^2$-gradient flow.
\end{remark}

% It is straightforward to verify that $d_R^2 ({\bm R}, {\bm R}^n) \geq 0$ in $\mathcal{V}^n$ and $d_R^2 ({\bm R}, {\bm R}^n) = 0$ if and only if ${\bm R} = {\bm R}^n$.  % Define $f(x) = (x + a) \ln (x / a + 1) - x$  $f''(x) = 1 /(x + a) > 0$ in $\mathcal{V}^n$. 
% Hence, we have 
It is straightforward to prove the following unconditional energy stability result by using the property of $d_R^2 ({\bm R}, {\bm R}^n)$. 
\begin{proposition}
  If ${\bm R}^{n+1}$ is a global minimizer of $J^n ({\bm R})$ in $\mathcal{V}^n$, then the numerical scheme is unconditionally energy stable.
\end{proposition}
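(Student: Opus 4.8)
The plan is to unwind what ``unconditionally energy stable'' means here --- namely that the discrete free energy is non-increasing, $\mathcal{F}[{\bm c}({\bm R}^{n+1})] \le \mathcal{F}[{\bm c}({\bm R}^n)]$, with no restriction on the step-size $\dt$ --- and to read this off directly from the minimization characterization \eqref{Minimization_R} together with two elementary properties of $d_R^2$.

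First I would record that $d_R^2({\bm R}^n,{\bm R}^n)=0$: when $R_l=R_l^n$ each summand in the definition of $d_R^2$ collapses to $k_l^-({\bm c}^n)^{{\bm \beta}^l}\dt\,\ln 1 - 0 = 0$. Hence $J^n({\bm R}^n)=d_R^2({\bm R}^n,{\bm R}^n)+\mathcal{F}[{\bm c}({\bm R}^n)]=\mathcal{F}[{\bm c}({\bm R}^n)]$, so the ``old'' energy is exactly the value of the functional at the starting point. Second --- and this is the only point requiring any argument --- I would show $d_R^2({\bm R},{\bm R}^n)\ge 0$ for every ${\bm R}\in\mathcal{V}^n$. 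Writing $a_l:=k_l^-({\bm c}^n)^{{\bm \beta}^l}\dt>0$ and $t_l:=(R_l-R_l^n)/a_l$, the $l$-th summand equals $a_l\,g(t_l)$ with $g(t)=(1+t)\ln(1+t)-t$; the admissibility constraint $R_l-R_l^n+a_l>0$ in $\mathcal{V}^n$ forces $t_l>-1$, so $g(t_l)$ is well defined. Since $g(0)=0$, $g'(t)=\ln(1+t)$, and $g''(t)=1/(1+t)>0$ on $(-1,\infty)$, the point $t=0$ is the unique minimizer of the strictly convex function $g$, giving $g(t_l)\ge 0$ and therefore $d_R^2({\bm R},{\bm R}^n)\ge 0$.

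Combining these two observations with the global optimality ${\bm R}^{n+1}=\mathrm{argmin}_{\mathcal{V}^n}J^n$ (which in particular gives $J^n({\bm R}^{n+1})\le J^n({\bm R}^n)$, a fact already built into $\mathcal{V}^n$) yields
\[
\mathcal{F}[{\bm c}({\bm R}^{n+1})] \le d_R^2({\bm R}^{n+1},{\bm R}^n)+\mathcal{F}[{\bm c}({\bm R}^{n+1})] = J^n({\bm R}^{n+1}) \le J^n({\bm R}^n) = \mathcal{F}[{\bm c}({\bm R}^n)],
\]
which is precisely the asserted energy decay. Because no hypothesis on $\dt$ entered the argument, the stability is unconditional.

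I do not expect a genuine obstacle: the whole proof is a two-line sandwich estimate. The only thing to be careful about is that the logarithms in $d_R^2$ are evaluated at strictly positive arguments $1+t_l$, which is exactly what the defining constraints of $\mathcal{V}^n$ guarantee, and that the sign analysis of $g$ is carried out cleanly so that $d_R^2\ge 0$ holds on the entire admissible set rather than just near ${\bm R}^n$. If one wants, the strict convexity of $g$ also shows $d_R^2({\bm R},{\bm R}^n)=0$ iff ${\bm R}={\bm R}^n$, which upgrades the inequality to strict energy decay whenever ${\bm R}^{n+1}\neq{\bm R}^n$.
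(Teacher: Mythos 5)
Your proof is correct and takes essentially the same route as the paper: establish $d_R^2(\cdot,{\bm R}^n)\ge 0$ with equality at ${\bm R}^n$, then conclude via the sandwich $\mathcal{F}[{\bm c}({\bm R}^{n+1})] \le J^n({\bm R}^{n+1}) \le J^n({\bm R}^n) = \mathcal{F}[{\bm c}({\bm R}^n)]$. If anything, your convexity argument for $g(t)=(1+t)\ln(1+t)-t$ on all of $(-1,\infty)$ is slightly more careful than the paper's, which only cites monotonicity of the corresponding function for $x\ge 0$ even though the admissible set permits $R_l-R_l^n<0$.
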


% If ${\bm R}^{n+1}$ is the global minimizer of $J^n ({\bm R})$ in $\mathcal{V}^n$, it is straightforward for show the energy stability of the scheme by using 
\begin{proof} 
Define $f(x) = (x + a) \ln (x / a + 1) - x$, where $a > 0$ is a given constant. It is clear that $f(x)$ is a monotonic increasing function of $x$ for $x \geq 0$ and $f(0) = 0$. Consequently, $d_R^2 ({\bm R}, {\bm R}^n) \geq 0$ in $\mathcal{V}^n$ and $d_R^2 ({\bm R}, {\bm R}^n) = 0$ if and only if ${\bm R} = {\bm R}^n$.
% The unconditional energy stability is a direct consequence of % the following inequality: 
Hence, if  ${\bm R}^{n+1}$ is a global minimizer of $J^n ({\bm R})$ in $\mathcal{V}^n$, we have 
\begin{equation}\label{Energy_stable}
  \mathcal{F}({\bm R}^{n+1}) \leq J^n ({\bm R}^{n+1}) \leq J^n ({\bm R}^{n}) = \mathcal{F}({\bm R}^n),
\end{equation}
which gives the unconditional energy stability. 
\end{proof}

%which indicates that the scheme (\ref{Minimization_R}) is energy stable.

\section{The positivity-preserving analysis and unique solvability}  \label{sec:positivity} 

The main theoretical question associated with the numerical scheme (\ref{Minimization_R}) is the existence and uniqueness of the global minimizer of $J^n ({\bm R})$ in $\mathcal{V}^n$. This property has been proved in \cite{liu2021structure} for the case with $M = 1$.  In this section, we demonstrate that the result can be generalized to the general case of $M \leq N$ and ${\rm rank} ({\sf S}) = M$. More precisely, we have the following theorem.

\begin{theorem}  \label{Multiple field-positivity} 
If $M \le N$ and ${\rm rank}(\sf S) = M$, then given ${\bm R}^n \in \mathbb{R}^M$, with ${\bm c}^n = {\bm c}_0 + {\sf S} {\bm R}^n   \in \mathbb{R}^N_{+}$, there exists a unique solution ${\bm R}^{n+1} \in \mathcal{V}^n$  
for the numerical scheme~\eqref{scheme_Mul_R}. %  so that the numerical scheme is well-defined.  , with discrete period boundary condition,
\end{theorem}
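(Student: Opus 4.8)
The plan is to work with the variational characterization~\eqref{Minimization_R} and to establish, in order: (i) $J^n$ attains a global minimum on the compact set $\overline{\mathcal V^n}$; (ii) every such minimizer lies strictly inside the admissible polytope, hence is an \emph{unconstrained} critical point of $J^n$ and therefore solves~\eqref{scheme_Mul_R}; and (iii) uniqueness follows from strict convexity. The preliminary observation is convexity: with $a_l := k_l^{-}({\bm c}^n)^{{\bm \beta}^l}\dt > 0$ and $f_a(x) := (x+a)\ln(x/a+1) - x$ (the function from the proof of the Proposition, for which $f_a''(x) = 1/(x+a) > 0$ on its domain), one has $d_R^2({\bm R},{\bm R}^n) = \sum_{l=1}^M f_{a_l}(R_l - R_l^n)$, so $d_R^2$ is \emph{strictly} convex in ${\bm R}$; and since $\mathcal F[{\bm c}({\bm R})] = \sum_{i=1}^N g_i\big(({\bm c}_0 + {\sf S}{\bm R})_i\big)$ is a sum of convex functions $g_i(c) = c(\ln(c/c_i^\infty)-1)$ composed with affine maps, $J^n = d_R^2 + \mathcal F[{\bm c}(\cdot)]$ is strictly convex. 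Hence a critical point of $J^n$ in the open polytope is automatically its unique global minimizer.

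Existence is then routine: $\overline{\mathcal V^n}$ is bounded by the argument recalled in Section~\ref{sec:numerical scheme}, and $J^n$ extends continuously to it because the singular terms of the form $x\ln x$ extend by $0$ at $x = 0$; a continuous function on a compact set attains its minimum, at some ${\bm R}^\star\in\overline{\mathcal V^n}$.

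The crux of the argument — and the step I expect to be the main obstacle, since this is precisely where the multi-reaction case is genuinely harder than $M=1$ — is to show ${\bm R}^\star$ is strictly feasible: $({\bm c}_0 + {\sf S}{\bm R}^\star)_i > 0$ for all $i$ and $R_l^\star - R_l^n + a_l > 0$ for all $l$. Suppose some such constraint is active at ${\bm R}^\star$. The key structural fact is that ${\bm R}^n$ satisfies \emph{every} constraint strictly (because $({\bm c}^n)_i > 0$ by hypothesis and $R_l^n - R_l^n + a_l = a_l > 0$); hence, the constraints being affine, the segment ${\bm R}(t) := (1-t){\bm R}^\star + t{\bm R}^n$ is strictly feasible for $t\in(0,1]$, and convexity together with $J^n({\bm R}^\star)\le J^n({\bm R}^n)$ gives $J^n({\bm R}(t))\le J^n({\bm R}^n)$, so ${\bm R}(t)\in\overline{\mathcal V^n}$ throughout. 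Differentiating $J^n({\bm R}(t))$ and using the chain rule, the contributions from the non-active constraints remain bounded as $t\to0^+$, whereas the contribution from each active constraint is of the form (positive constant)$\times\ln t$ — positivity of that constant coming exactly from $a_{l_0} > 0$ resp.\ $({\bm c}^n)_{i_0} > 0$ — which tends to $-\infty$ with no possibility of cancellation. Thus $\frac{\dd}{\dd t}J^n({\bm R}(t))\to-\infty$ as $t\to0^+$, so $J^n({\bm R}(t_0)) < J^n({\bm R}^\star)$ for small $t_0 > 0$ with ${\bm R}(t_0)\in\overline{\mathcal V^n}$, contradicting the minimality of ${\bm R}^\star$; hence ${\bm R}^\star$ lies in the open polytope.

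To finish, ${\bm R}^\star$ being in the open polytope makes $J^n$ smooth near it; if $J^n({\bm R}^\star) < J^n({\bm R}^n)$ then ${\bm R}^\star$ is an interior local minimizer, so $\nabla J^n({\bm R}^\star) = 0$, while the borderline case $J^n({\bm R}^\star) = J^n({\bm R}^n)$ forces ${\bm R}^\star = {\bm R}^n$ by strict convexity (otherwise the midpoint of the segment $[{\bm R}^\star,{\bm R}^n]$, which also lies in the open polytope and in $\overline{\mathcal V^n}$, would have strictly smaller energy) and $\nabla J^n({\bm R}^n) = 0$ follows likewise. By the identity $\frac{\delta J^n}{\delta R_l}({\bm R}) = \ln\big((R_l - R_l^n)/a_l + 1\big) + \frac{\delta\mathcal F}{\delta R_l}$ recorded in Section~\ref{sec:numerical scheme}, $\nabla J^n({\bm R}^\star) = 0$ is exactly the scheme~\eqref{scheme_Mul_R}; and ${\bm R}^{n+1} := {\bm R}^\star$ lies in $\mathcal V^n$ since its constraints are strict and $J^n({\bm R}^{n+1})\le J^n({\bm R}^n)$. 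Uniqueness is immediate: strict convexity of $J^n$ permits at most one critical point. The hypotheses $M\le N$ and $\mathrm{rank}({\sf S}) = M$ guarantee that the reaction-trajectory formulation and the associated feasible polytope are well posed; the quantitative ingredient driving the positivity step is the strict positivity of ${\bm c}^n$ together with the affine dependence ${\bm c} = {\bm c}_0 + {\sf S}{\bm R}$.
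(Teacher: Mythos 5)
Your proposal is correct, but it takes a genuinely different route from the paper. Both arguments share the same skeleton --- convexity of $J^n$ plus boundedness of $\mathcal V^n$ give a unique minimizer, and the real work is excluding boundary minimizers so that the minimizer is an interior critical point, i.e.\ a solution of~\eqref{scheme_Mul_R}. Where you diverge is in how the boundary is excluded. The paper introduces the shrunken set $\mathcal V^n_\delta$ and, for the representative case $M=2$, $N=4$, runs an exhaustive case analysis over the sign patterns of the transformed stoichiometric entries $(S_{31},S_{32},S_{41},S_{42})$, in each case bounding a partial derivative $\partial J^n/\partial R_l$ from above or below by a constant plus $S_{ij}\ln\delta$ and choosing $\delta$ small enough to force a definite sign. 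You instead run a single line-segment argument from a putative boundary minimizer ${\bm R}^\star$ toward the strictly feasible point ${\bm R}^n$, showing $\frac{\dd}{\dd t}J^n({\bm R}(t))\to-\infty$ as $t\to 0^+$ because each active constraint contributes $(\text{positive constant})\times\ln t$ while inactive ones stay bounded; I checked the coefficients ($a_{l_0}$ for an active $d_R^2$-constraint, $c_{i_0}^n$ for an active concentration constraint) and the sign bookkeeping is right. Your route buys uniformity: it works verbatim for any $M\le N$ and any sign pattern, with no case analysis, and it exposes that the rank condition on $\sf S$ is not actually needed for solvability (strict convexity already comes from the diagonal Hessian of $d_R^2$, so the rank hypothesis only matters for the well-posedness of the trajectory formulation itself). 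What the paper's approach buys in exchange is quantitative information --- an explicit $\delta$ separating the minimizer from the boundary in terms of $A^*$, $\Delta t$, and the stoichiometric data --- which is the kind of bound one wants for a subsequent convergence analysis; your argument is purely qualitative. Two points you should make explicit if you write this up: the passage from $\phi'(t)\to-\infty$ to $\phi(t_0)<\phi(0)$ uses continuity of the extension of $J^n$ at the boundary point ${\bm R}^\star$ (which you have, since $x\ln x\to 0$), and the observation that a minimizer over $\overline{\mathcal V^n}$ interior to the polytope is automatically an unconstrained local minimizer because any nearby point with strictly smaller $J^n$ satisfies the sublevel constraint for free --- this makes your two-case discussion at the end unnecessary.
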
 

%{\color{red}In general, how can we show the boundaries of $\mathcal{V}^n$ are given by $c({\bm R}) = 0$, ${\bm R} = {\bm \gamma}^n$, and $R_l \rightarrow \infty$. }

% We may need some conditions on the stoichiometric matrix ${\sf S}$}. 

To prove this result, we first observe the following lemma. 

\begin{lemma}
  If $M \leq N$ and $\rm{rank}({\bm \sigma }) = M$, $0 < c_i \leq A^*$, then $J^n({\bm R})$ is a convex function of ${\bm R}$ in $\mathcal{V}^n$.
\end{lemma}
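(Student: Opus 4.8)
The plan is to show convexity of $J^n(\bm R) = d_R^2(\bm R, \bm R^n) + \mathcal F[\bm c(\bm R)]$ by establishing convexity of each of the two summands separately on the admissible set $\mathcal V^n$. For the second term, note that $\mathcal F[c] = \sum_i c_i(\ln(c_i/c_i^\infty) - 1)$ is a convex function of the concentration vector $\bm c$ (each summand $c_i \ln c_i$ is convex in $c_i$), and $\bm c(\bm R) = \bm c_0 + {\sf S}\bm R$ is an affine map of $\bm R$; composition of a convex function with an affine map is convex, so $\bm R \mapsto \mathcal F[\bm c(\bm R)]$ is convex with no rank hypothesis needed.

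For the first term $d_R^2(\bm R, \bm R^n) = \sum_{l=1}^M f_l(R_l - R_l^n)$ where $f_l(x) = (x + a_l)\ln(x/a_l + 1) - x$ with $a_l = k_l^-(\bm c^n)^{\bm\beta^l}\dt > 0$, I would compute $f_l''(x) = \frac{1}{x + a_l} > 0$ on the region $x + a_l > 0$ that defines $\mathcal V^n$. Hence each $f_l$ is strictly convex in its scalar argument, and since $R_l - R_l^n$ depends on $\bm R$ through a single coordinate, $d_R^2$ is a sum of strictly convex functions of the individual coordinates, hence convex (in fact strictly convex) in $\bm R$. Adding the two convex pieces gives convexity of $J^n$ on $\mathcal V^n$, which is itself a convex set (it is the intersection of the half-spaces $c_0 + {\sf S}\bm R \in \mathbb R^N_+$ and $R_l - R_l^n + a_l > 0$ with the sublevel set $\{J^n \le J^n(\bm R^n)\}$; the sublevel set of a convex function is convex, so one should either note the convexity is circular-free because it suffices to work on the larger convex set, or simply establish convexity of $J^n$ on all of the open polyhedron where it is defined).

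The role of the hypotheses deserves a remark: the bound $0 < c_i \le A^*$ and the rank condition $\mathrm{rank}({\bm\sigma}) = M$ are not actually needed for mere convexity — they become essential later for \emph{coercivity}/properness of $J^n$ and hence for existence and uniqueness of the global minimizer (the rank condition guarantees that $\bm R \mapsto \bm c(\bm R)$ is injective, so strict convexity of $d_R^2$ in $\bm R$ is not lost, and that $\mathcal F[\bm c(\bm R)] \to \infty$ appropriately). I expect the main subtlety — not really an obstacle — to be bookkeeping about the domain: one must make sure that on $\mathcal V^n$ all arguments $x/a_l + 1 = (R_l - R_l^n)/a_l + 1$ and all concentrations $c_i(\bm R)$ are strictly positive so that the logarithms are well-defined and twice differentiable, which is exactly how $\mathcal V^n$ was constructed. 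Once differentiability on the open set is in hand, the Hessian of $J^n$ is diagonal-plus-affine-pullback, manifestly positive semidefinite, and convexity follows immediately.
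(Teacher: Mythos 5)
Your proof is correct, and its overall structure---convexity of $J^n$ term by term, with the diagonal Hessian computation $f_l''(x)=1/(x+a_l)>0$ for the metric term $d_R^2$---is the same as the paper's. Where you diverge is in the free-energy term: the paper computes the pulled-back Hessian explicitly, $\nabla_R^2 F = {\sf S}^{\rm T}\,\mathrm{diag}(1/c_1,\dots,1/c_N)\,{\sf S}$, and uses the upper bound $c_i\le A^*$ together with $\mathrm{rank}({\sf S})=M$ to conclude $\lambda_{\min}(\nabla_R^2 F)\ge \lambda_{\min}({\sf S}^{\rm T}{\sf S})/A^*>0$, i.e.\ \emph{strong} convexity of $\mathcal F[{\bm c}({\bm R})]$ with an explicit modulus; you instead invoke convexity of $\mathcal F$ in ${\bm c}$ plus affine composition, which yields plain convexity with no hypotheses at all. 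Your observation that the rank condition and the bound $A^*$ are therefore not needed for the lemma as literally stated is accurate, and your further point that strict convexity of $J^n$ (hence uniqueness of the minimizer) already follows from the strictly positive diagonal Hessian of $d_R^2$ is also a legitimate simplification of the paper's logic. What the paper's heavier computation buys is a quantitative strong-convexity constant for the energy term alone, uniform in $\Delta t$, which is the natural place where the structural hypotheses on ${\sf S}$ enter; your route is more economical but discards that quantitative information. Your handling of the potential circularity in the definition of $\mathcal V^n$ (prove convexity on the ambient open polyhedron where all logarithms are defined, then restrict) is the right fix and is implicitly what the paper does as well.
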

% First, we show that $J^n ({\bm R})$ is a strict convex function of ${\bm R}$ in $\mathcal{V}^n$.
\begin{proof}
Denote $g({\bm R}) =  d_R^2({\bm R}, {\bm R}^n)$. A direct calculation implies that 
\begin{equation}
\frac{\pp^2 g}{\pp R_l^2} = \frac{1}{ R_l - R^n_l + k_l^{-}  ({\bm c}^n)^{{\bm \beta}^l} \dt } > 0, \quad  \frac{\pp^2 g}{\pp R_l \pp R_k} = 0~\text{if}~l \neq k ,  \quad \forall {\bm R} \in \mathcal{V}^n . 
\end{equation}
Hence $g({\bm R})$ is a convex function of ${\bm R}$ over $\mathcal{V}^n$. For $\mathcal{F}[c({\bm R})]$, we recall that ${\bm c} ({\bm R}) = {\bm c}_0 + {\sf S} {\bm R}$, and a direct calculation gives 
  \begin{equation}
  \nabla_R^2 F({\bm R}) = {\sf S}^{\rm T} (\nabla^2_{\bm c} F({\bm c})) {\sf S}  , 
  \end{equation}
  where ${\nabla^2_{\bm c} F} = \rm{diag} (\frac{1}{c_1}, \frac{1}{c_2}, \ldots \frac{1}{c_N}).$ %  Since $0 < c_i \leq A^*$, 
  Because of the defintion of $\mathcal{V}^n$, we have a uniform bound of $c_i$, i.e., $0 < c_i \leq A^*$, which results in
  $$\lambda_{\rm min} ( \nabla_R^2 F) \geq \frac{1}{A^*} \lambda_{\rm min} ( {\sf S}^{\rm T}  {\sf S} ) > 0 . $$
Henceforth, $F({\bm R})$ is a convex function of ${\bf R}$ over $\mathcal{V}^n$.
 \end{proof}

% Notice that {\color{red} $\mathcal{V}^n$ is a convex set in $\mathbb{R}^M$ }, 

Since $\mathcal{V}^n$ is a bounded set of $\mathbb{R}^M$, and $J^n ({\bm R})$ is a convex function of ${\bm R}$ in $\mathcal{V}^n$, then there exists a unique minimizer of $J^n({\bm R})$ in $\mathcal{V}^n$. The key point of the proof is to show that the minimizer of $J^n({\bm R})$ over $\mathcal{V}^n$ cannot occur on the boundary of $\mathcal{V}^n$, so that the global minimizer of $J^n ({\bm R})$ is a critical point of $J^n ({\bm R})$, which turns out to be a solution of (\ref{scheme_Mul_R}). % {\color{red} For general ${\sf S}$, we don't have an upper bound of $R_l$.}

To illustrate this idea, we present the case with $M = 2$ and $N = 4$. The analysis can be extended to different values of $M$ and $N$ following the same strategy.  First, we define a linear transformation of $R_i$
\begin{equation}
\tilde{R}_1 = c_{1}^0 + \sum_{j=1}^2 S_{1j}R_j, \quad 
\tilde{R}_2 = c_{2}^0 + \sum_{j=1}^2 S_{2j} R_j . 
\end{equation}
The positive stoichiometric compatibility class can be written in terms of $\tilde{R}_1$ and $\tilde{R}_2$, given by
$$\{ (\tilde{R}_1, \tilde{R}_2) | \tilde{R}_i > 0, c_3^0 + \sum_{j=1}^2   \tilde{S}_{3j}\tilde{R}_j > 0, \quad c_4^0 + \sum_{j=1}^2   \tilde{S}_{4j}\tilde{R}_j > 0  \}, $$
where $\tilde{S}_{3j}$ and $\tilde{S}_{4j}$ are transformed stoichiometric coefficients in terms of  $\tilde{R}_1$ and $\tilde{R}_2$.

% One can prove the unique solvability of (\ref{scheme_Mul_R}) by showing that the minimization problem (\ref{Minimization_R}) admits a unique solution in $\mathcal{V}^n$.

\begin{example}
We consider a concrete example of a reaction network
\begin{equation}
X_1 + 2 X_2 \ce{<=>} X_3, \quad X_2 + X_3 \ce{<=>} 2 X_4 . 
\end{equation}
In turn, the stoichiometric matrix is given by
\begin{equation}
{\sf S} = 
\begin{pmatrix}
& -1 & 0 \\
& -2 & 1 \\
& 1 & -1 \\
& 0 & 2 \\
\end{pmatrix} . 
\end{equation}
Assume that ${\bm c}_0 = (1, 1, 1, 1)^{\rm T}$, then the positive stoichiometric compatibility class corresponds to the set in the reaction space
\begin{equation*}
\{ (R_1, R_2) |     1 - R_1 > 0, 1 - 2 R_1 + R_2 > 0, 1 + R_1 - R_2 \geq 0, 1 + 2 R_2 > 0 \} . 
\end{equation*}
In this case, the linear transformation of $R_i$ is defined as 
\begin{equation*}
\tilde{R}_1 = 1 - R_1, \quad \tilde{R}_2 = 1 - 2 R_1 + R_2 , 
\end{equation*}
and the stoichiometric compatibility class becomes
$$ \{ (\tilde{R}_1, \tilde{R}_2) | \, \tilde{R}_i > 0, 3 - 3 \tilde{R}_1 + \tilde{R}_2 > 0, \quad -1 + 4 \tilde{R}_1 -  2 \tilde{R}_2 > 0. \}$$ It is important to notice that the boundary of the stoichiometric compatibility class turns out to be $c_3 = 0$ and (or) $c_4 = 0$.
\end{example}

%{\color{blue}
\iffalse
Meanwhile, the following example has no upper bound of $c_1$ and $c_2$. However, such a model is impossible in the reality.
\begin{example}
  Consider a reaction networks
  \begin{equation}
       X_3 \ce{<=>} X_1 + X_4, \quad X_4 \ce{<=>} X_2 + X_3 . 
  \end{equation}
Then the stoichiometric matrix is given by
\begin{equation}
{\sf S} = 
\begin{pmatrix}
& 1 & 0 \\
& 0 & 1 \\
& -1 & 1 \\
& 1 &  -1 \\
\end{pmatrix} . 
\end{equation}
\end{example}
\fi
%}

Without ambiguity, we omit the tilde notation in the following description. With a linear transformation, the kinematics can be rewritten as
\begin{equation}
  c_1 = R_1, \quad c_2 = R_2, 
  \quad c_3 = c_0^3 + S_{31} R_1 + S_{32} R_2, \quad  c_4 = c_0^4 + S_{41} R_1 + S_{42} R_2,
\end{equation}
and the free energy becomes
\begin{equation}
\mathcal{F}[R_1, R_2] = R_1 \ln \left(  \frac{R_1}{c_1^{\infty}} - 1 \right) + R_2 \ln \left(  \frac{R_2}{c_2^{\infty}} - 1 \right) + c_3 \ln \left(  \frac{c_3}{c_3^{\infty}} - 1 \right)  + c_4 \ln \left(  \frac{c_4}{c_4^{\infty}} - 1 \right).
\end{equation}

%In addition, a careful calculation implies the following discrete free energy to be optimized at each time step: 
%\begin{equation}
 % J^n(R_1, R_2) = \sum_{l=1}^2 \left( (R_l - R_l^n + k_l^- ({\bm c}^n)^{\beta^l} \Delta t) \ln \left(  \frac{R_l - R^n_l}{ k_l^{-}  ({\bm c}^n)^{{\bm \beta}^l}  \dt} + 1 \right) - (R_l - R_l^n) \right)  + F(R_1, R_2).
 % \end{equation}

% {\color{blue}Due to the mass conservation, we have a uniform bounds on $c_i$, i.e., $0 \leq c_i < A^*$}. NOT Right...

Denote $\gamma^n_l =   R_l^n -  k_l^{-}({\bm c}^n) \Delta t$.
% {\color{blue}Without lose generality, let's assume $\gamma_i^n < 0$, the case that $\gamma_i^n > 0$ can be analyzed by the same manner.} 
Since $R_l^n > 0$, it is clear that $\gamma^n_l \geq 0$ for $\Delta t$ significantly small. 
Without loss of generality, we assume that $\gamma^n_l = 0$. In the case where $\gamma^n_l > 0$, we can adopt our approach to work on $R_l - \gamma^n_l$ instead. Moreover, to simplify the presentation, we take  $\bar{c}_0^3 =  \bar{c}_0^4 = 1$. Then the admissible set is given by
\begin{equation}
  % \mathcal{V}^n = \{  (R_1, R_2)  ~|~   R_1 > 0, R_2 > 0, c_3(R_1, R_2) > 0, c_4(R_1, R_2) > 0   \}
  \mathcal{V}^n = \mathcal{V}^n_0 \cap \{ R | J^n({\bm R}) \leq J^n({\bm R}^n) \} \ ,  
  \end{equation}
%It is convenient to do another linear transform such that the admissible set becomes
%  \begin{equation}
    % \mathcal{V}^n = \{  (R_1, R_2)  ~|~   R_1 > 0, R_2 > 0, c_3(R_1, R_2) > 0, c_4(R_1, R_2) > 0   \}
 %   \mathcal{V}^n = \{  (R_1, R_2)  ~|~   R_1 > 0, R_2 > 0, \bar{c}_{0}^3 + S_{31} R_1 + S_{32} R_2 > 0, \bar{c}_{0}^4 + S_{41} R_1 + S_{42} R_2 > 0   \}
  % \end{equation}
% 
% Moreover,  then the admissible set becomes
% Since $(R_1^n, R_2^n) \in \mathcal{V}^n$, $\mathcal{V}^n$ is non-empty.
where $ \mathcal{V}^n_0 = \{  (R_1, R_2)  ~~|~~   R_1 > 0, R_2 > 0, 1 + S_{31} R_1 + S_{32} R_2 > 0, c_1 + S_{41} R_1 + S_{42} R_2 > 0   \}.$
Figure \ref{admissible_set}(a)-(i) displays the possible geometry of the set $\mathcal{V}^n_0$.
\begin{figure}
  \centering
  \includegraphics[width = 0.9 \linewidth]{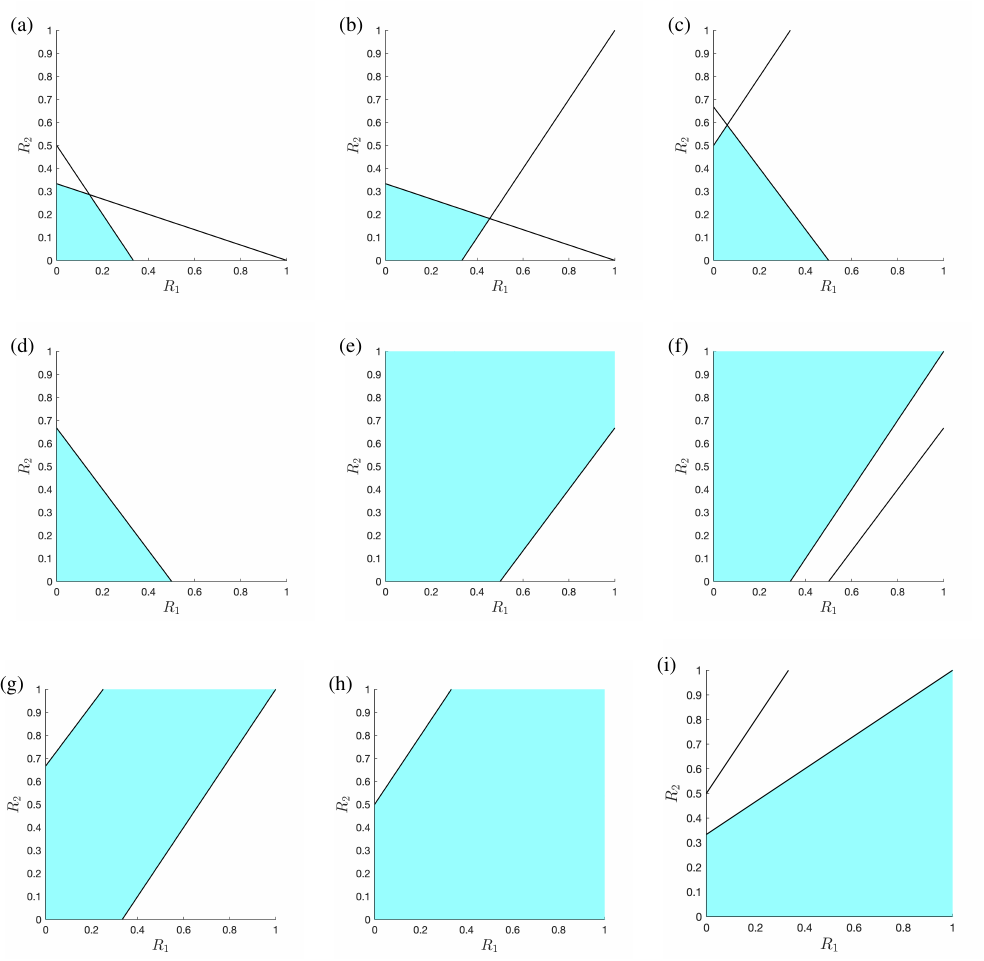}
  \caption{Illustration of the transformed stoichiometric compatibility set $\mathcal{V}^n_0 = \{  (R_1, R_2)  ~|~   R_1 > 0, R_2 > 0, c_3(R_1, R_2) > 0, c_4(R_1, R_2) > 0   \}$ according to the signs of $(S_{31}, S_{32}, S_{41}, S_{42})$, (a) $(-,-,-,-)$; (b) $(-,+,-,-)$ or $(-,-,-,+)$; (c) $(+,-,-,-)$ or $(-,-,+,-)$; (d) $(+,+,-,-)$ or $(-,-,+,+)$; (e) $(+,+,-,+)$ or $(-,+,+,+)$; (f) $(-,+,-,+)$; (g) $(-,+,+,-)$ or $(+,-,-,+)$; (h) $(+,-,+,+)$ or $(+,+,+,-)$;  (i) $(+,-,+,-).$  The case $(+,+,+,+)$ is not shown. }\label{admissible_set}
  \end{figure}

  It is important to note that the set $\mathcal{V}^n_0$ may not necessarily be bounded. Hence, it is crucial to consider $\mathcal{V}^n_0 \cap \{ R | J^n({\bm R}) \leq J^n({\bm R}^n) \}$. The boundedness of ${\bm R}$ come from the condition $J^n({\bm R}) \leq J^n({\bm R}^n)$. Due to this bound, we have $ 0 < c_i ({\bm R}) < A^*,~ \forall {\bm R} \in \mathcal{V}^n$ for some constant $A^*$.
  
  %As a result, we only need 
  To show that the global minimizer of $J^n(R_1, R_2)$ over $\mathcal{V}^n$ cannot be obtained on the boundary, we only need to consider following possible boundaries
  \begin{equation}
    \begin{aligned}
    & \Gamma_1 = \{ (R_1, R_2) | R_1 = 0  \},  \Gamma_2 = \{ (R_1, R_2) | R_2 = 0  \}, \\ 
    & \Gamma_3 = \{ (R_1, R_2) | c_3(R_1, R_2) = 0  \},  \Gamma_4 = \{ (R_1, R_2) | c_4(R_1, R_2) = 0  \} . 
    \end{aligned}
  \end{equation}
To this end, the following subset of $\mathcal{V}^n$ is taken into consideration: % ({\color{red} May not be closed}),
\begin{equation}
\mathcal{V}^n_{\delta} = \{ (R_1, R_2) \in \mathcal{V}^n  ~|~ R_1, R_2 \geq g(\delta), c_3, c_4 \geq \delta  \} \subset \mathcal{V}^n . 
\end{equation}
Let
\begin{equation}
  \begin{aligned}
  & \Gamma_1^{\delta} = \{ (R_1, R_2) | R_1 = g(\delta) \},  \Gamma_2^{\delta} = \{ (R_1, R_2) | R_2 = g(\delta)  \}, \\ 
  & \Gamma_3^{\delta} = \{ (R_1, R_2) | c_3(R_1, R_2) = \delta  \},  \Gamma_4^{\delta} = \{ (R_1, R_2) | c_4(R_1, R_2) = \delta  \} , 
  \end{aligned}
\end{equation}
where $g(\delta)$ is a certain function that will be specified later. We only need to prove that the minimizer of $J^n$ over $\mathcal{V}^n_{\delta}$ could not occur on $\Gamma_i^{\delta} \cap \mathcal{V}^n$ ($i=1,\ldots 4$), if $\delta$ is taken significantly small. The strategy is to first assume that the minimizer of $J^n(R_1, R_2)$ over $\mathcal{V}^n_{\delta}$ occurs at a boundary point $(R_1^*, R_2^*) \in \Gamma_i^{\delta}$ for some $i$. In turn, if one can find $(R_1', R_2') \in (\mathcal{V}^n_{\delta})^{\circ}$ that $J^n(R_1', R_2') < J^n (R_1^*, R_2^*)$, then it leads to a contradiction. Such a strategy follows similar ideas as the positivity-preserving analysis reported in~\cite{chen2019positivity,liu2021structure}. At the beginning, we calculate the partial derivatives of $J^n(R_1, R_2)$ with respect to $R_1$ and $R_2$  . The derivatives are given by  
\begin{equation}
  \begin{aligned}
  & \frac{\pp J^n}{\pp R_1} =  \ln \left( \frac{R_1 - R_1^n}{ k_1^- ({\bm c}^n)^{\beta_1} \Delta t} + 1 \right) + \ln \left( \frac{R_1}{c_1^{\infty}} \right) + S_{31} \ln \left( \frac{c_3({\bm R})}{c_3^{\infty}} \right) + S_{41} \ln \left( \frac{c_4({\bm R})}{c_4^{\infty}} \right) , \\
  & \frac{\pp J^n}{\pp R_2} =  \ln \left( \frac{R_2 - R_2^n}{ k_2^- ({\bm c}^n)^{\beta_2} \Delta t}  + 1\right) + \ln \left( \frac{R_2}{c_2^{\infty}} \right) + S_{32} \ln \left( \frac{c_3({\bm R})}{c_3^{\infty}} \right) + S_{42} \ln \left( \frac{c_4({\bm R})}{c_4^{\infty}} \right) . 
  \end{aligned}
  \end{equation}
We will use these derivatives extensively in the subsequent analysis..
%boundary of $\mathcal{V}^n$
%Next, consider the following subset of $\mathcal{V}^n$ % ({\color{red} May not be closed}),
%\begin{equation}
%\mathcal{V}^n_{\delta} = \{ (R_1, R_2): R_1, R_2 \geq g(\delta), c_3, c_4 \geq \delta  \} \subset V,
%\end{equation}

%We assume that the minimizer of $J^n$ occur at $V_{\delta}$ for some $\delta$, then we have

%One only need to consider four boundaries of $\mathcal{V}_{\delta}$:
%\begin{equation}
% R_1 = g(\delta), \quad R_2 = g(\delta), \quad c_3(R_1, R_2) = \delta, \quad c_4(R_1, R_2) = \delta.
% \end{equation}
% \newpage

It is noticed that $\Gamma_1^{\delta} \cap \mathcal{V}^n$ and $\Gamma_2^{\delta} \cap \mathcal{V}^n$ are always two boundary sections of $\mathcal{V}^n_{\delta}$. We first consider the boundaries $\Gamma_1^{\delta} \cap \mathcal{V}^n$ and $\Gamma_2^{\delta} \cap \mathcal{V}^n$, by assuming the minimizer occurs at $R_1^* = g(\delta)$ or $R_2^* = g(\delta)$.
% By a direct calculation, we hav
%In the following, %we assume that $c_{0}^3 = c_{0}^4 = 1$ to simplify the presentation, and 
% we denote $c_3^* = c_3 (R_1^*, R_2^*)$ and $c_4^* = c_4 (R_1^*, R_2^*)$ for simplicity.
% In the meantime, . 
Because of the symmetry, we only need to consider the case that $R_1^* = g(\delta)$, which in turns indicates that 
\begin{equation}
  \begin{aligned}
  \frac{\pp J^n}{\pp R_1} |_{(R_1^*, R_2^*)} & =\ln \left( g(\delta) \right) +  \ln (g(\delta)) + S_{31} \ln (c_3^*) + S_{41} \ln (c_4^*) + Q_1 , 
  % & \leq \ln \left( g(\delta) - \gamma_1^n \right) + \ln (g(\delta)) - |S_{31}| \ln (\delta) - |S_{41}| \ln (\delta) + Q_1,  \\
  \end{aligned}
\end{equation}
where $Q_1 =  - \ln (k_1^- ({\bm c}^n)^{\beta_1} \Delta t)  - \ln c_1^{\infty} - S_{31} \ln c_3^{\infty} - S_{41} \ln c_4^{\infty}$ is a constant. Recall that $\delta \leq c_3^* \leq A^*$ and $\delta \leq c_4^* \leq A^*$, and we always have
 $$S_{31} \ln (c_3^*) + S_{41} \ln (c_4^*) \leq - |S_{31}| \ln \delta - |S_{41}| \ln \delta , $$
for some significantly small $\delta$. 
% (Here we assume $A^* > 1$.), hence the upper bounded of $S_{31} \ln (c_3^*)$ are given by  $ - |S_{31}|  \ln \delta$ or $ |S_{31}| \ln A^*$ }
One can always choose $g(\delta) = \delta^{\alpha}$ for some positive $\alpha$ such that $\frac{\pp J^n}{\pp R_1} |_{(R_1^*, R_2^*)} < 0$ with $\delta$ being significantly small. Then we can find $R_1' > R_1^* = g(\delta)$ such that $J^n(R_1', R_2^*) < J^n (R_1^*, R_2^*)$. Because of the fact that $(R_1', R_2^*) \in \mathcal{V}_{\delta}$, this contradicts with the assumption that $(R_1^*, R_2^*)$ is a minimizer.

Next, we look at the possible boundary sections $\Gamma_3^{\delta} \cap \mathcal{V}^n$ and $\Gamma_4^{\delta} \cap \mathcal{V}^n$,. The following different cases have to be discussed separately.

\noindent {\bf Case 1.} \,  $S_{31} < 0$,  $S_{32} < 0$,  $S_{41} < 0$, $S_{42} <0$:
In this case, the admissible set is sketched in Figure \ref{admissible_set}(a), and $V_{\delta^n}$ is the closed bounded set. % Since $(R_1^*, R_2^*) \in \pp \mathcal{V}_{\delta}$, we first consider $c_3^* = \delta$. 

We first assume that the minimizer occurs on $\Gamma_3^{\delta} \cap \mathcal{V}^n$.
Since $S_{31} < 0$,  $S_{32} < 0$, we see that either $| S_{31} |  R_1^* \ge \frac{1}{3}$ or $| S_{32} |  R_2^* \ge \frac{1}{3}$, if $\delta$ is significantly small.  Without loss of generality, it is assumed that $| S_{31} |  R_1^* \ge \frac{1}{3}$, so that $R_1^* \ge \frac{1}{- 3 S_{31}} := B_1^*$. Also notice that
\begin{equation}
  \begin{aligned}
  \frac{\pp J^n}{\pp R_1} |_{(R_1^*, R_2^*)}  & =  \ln \left( \frac{R_1 - R_1^n}{ k_1^- ({\bm c}^n)^{\beta_1} \Delta t}  + 1 \right) + \ln \left( \frac{R_1}{c_1^{\infty}} \right) + S_{31} \ln \left(  \frac{\delta}{c_3^{\infty}} \right) + S_{41} \ln \left( \frac{c_4^*}{c_4^{\infty}} \right) \\
  & = \ln \left( R_1 \right) + \ln R_1  + S_{31} \ln (\delta) + S_{41} \ln (c_4^*) + Q_1 \\
  & \geq \ln B_1^* + \ln B_1^* + S_{31} \ln (\delta) + S_{41} \ln A^* + Q_1 , 
  \end{aligned}
\end{equation}
% ({\color{red} Still need to think if $\gamma_1^n > 0$.})
where $Q_1 = - \ln (k_1^- ({\bm c}^n)^{\beta_1} \Delta t)  - \ln c_1^{\infty} - S_{31} \ln c_3^{\infty} - S_{41} \ln c_4^{\infty}$ is a constant, and $A^*$ is the upper bound of $c_4$ in $\mathcal{V}^n \cap \{ {\bm R} ~|~ J^n({\bm R}) \leq J^n({\bm R}^n) \}$. Since $B_1^*$, $A^*$ and $Q_1$ are constants that are independent with $\Delta t$ and $\delta$, we are able to choose $\delta$ significantly small such that %$ \ln B_1^* + \ln B_1^* + S_{31} \ln (\delta) + S_{41} \ln A^* + Q_1 > 0,$ which indicates that 
$\frac{\pp J^n}{\pp R_1} |_{(R_1^*, R_2^*)} > 0$. %which leads to a contradiction that $(R_1^*, R_2*)$ is a minimizer in $\mathcal{V}_{\delta}$, 
In other words, one can find $\delta < R_1' < R_1$ such that $J(R_1', R_2^*) \leq J(R_1^*, R_2^*)$. The fact that $c_3(R_1', R_2^*) > c_3^* = \delta \in \mathcal{V}^n_\delta$ leads to a contradiction that $(R_1^*, R_2*)$ is a minimizer in $\mathcal{V}_{\delta}$. Using a similar argument, we are able to prove that the minimizer cannot occur at $c_4^* = \delta$, either.

\noindent {\bf Case 2.} \,  $S_{31} < 0$,  $S_{32} < 0$,  $S_{41} < 0$, $S_{42} >0$, which corresponds to Figure \ref{admissible_set}(b).

We first consider the boundary $\Gamma_3^{\delta} \cap \mathcal{V}^n$. On this boundary section, we see that either $R_1^* \geq \frac{1}{- 3 S_{31}} = B_1^*$ or $R_2^* \geq \frac{1}{- 3 S_{32}} = B_2^*$. In addition, denote $B_3^* = \min (B_1^*, -1 / S_{41})$. If $R_1^* \geq B_3^*$, using similar arguments in the previous case, we have 
\begin{equation}
  \begin{aligned}
  \frac{\pp J^n}{\pp R_1} |_{(R_1^*, R_2^*)}  %& =  \ln \left( \frac{R_1 - R_1^n}{ k_1^- ({\bm c}^n)^{\beta_1} \Delta t} \right) + \ln \left( \frac{R_1}{c_1^{\infty}} \right) + S_{31} \ln \left(  \frac{\delta}{c_3^{\infty}} \right) + S_{41} \ln \left( \frac{c_4^*}{c_4^{\infty}} \right) \\
  & = \ln \left( R_1 \right) + \ln R_1  + S_{31} \ln (\delta) + S_{41} \ln (c_4^*) + Q_1 \\
  & \geq \ln B_3^* + \ln B_3^* + S_{31} \ln (\delta) + S_{41} \ln A^* + Q_1 . 
  \end{aligned}
\end{equation}
In turn, $\delta$ can be chosen significantly small, so that $\frac{\pp J^n}{\pp R_1} |_{(R_1^*, R_2^*)} > 0$. This leads to a contradiction. If $R_1^* \leq B_3^* \leq B_1^*$, we get  $R_2^* \geq B_2^*$, and notice that
\begin{equation}
c_4^* = 1 + S_{41} R_1^* + S_{42} R_2^* \geq 1 + S_{41} B_3^*  + S_{42} B_2^* \geq S_{42} B_2^*, 
\end{equation}
% We can look at $\frac{\pp J^n}{\pp R_2} |_{(R_1^*, R_2^*)}$, which 
\begin{equation}
  \begin{aligned}
  \frac{\pp J^n}{\pp R_2} |_{(R_1^*, R_2^*)}  & = \ln (R_2^*) + \ln (R_2^*) + S_{41} \ln (\delta) + S_{42} \ln c_4^* + Q_2  \\
  & \geq \ln B_2^* + \ln B_2^* + S_{31} \ln \delta + S_{42} \ln (S_{42} B_2^*) + Q_2.
  \end{aligned}
\end{equation}
Again, since other terms are constants, we can choose $\delta$ significantly small, such that $\frac{\pp J^n}{\pp R_2} |_{(R_1^*, R_2^*)} > 0$. Therefore, one can find $R_2' < R_2^*$, such that $J^n (R_1^*, R_2') < J^n (R_1^*, R_2^*)$, which leads to a contradiction as $c_3(R_1^*, R_2') \in \mathcal{V}_{\delta}$.

Next we consider the case of $c_4^* = \delta$ (and $R_2^* > \delta$). Notice that, by choosing $\delta$ significantly small, we have
\begin{equation}
 S_{41} R_1^* = \delta  - 1 - S_{42} R_2^* \leq \delta - 1, \Rightarrow  R_1^* \geq \frac{-1 + \delta}{S_{41}} . 
\end{equation}
By choosing $\delta$ significantly small, we get $R_1^* \geq - \frac{1}{2 S_{41}} = B_4^*$. Therefore, the following inequality is valid: 
\begin{equation}
  \begin{aligned}
  \frac{\pp J^n}{\pp R_1} |_{(R_1^*, R_2^*)}  %& =  \ln \left( \frac{R_1 - R_1^n}{ k_1^- ({\bm c}^n)^{\beta_1} \Delta t} \right) + \ln \left( \frac{R_1}{c_1^{\infty}} \right) + S_{31} \ln \left(  \frac{\delta}{c_3^{\infty}} \right) + S_{41} \ln \left( \frac{c_4^*}{c_4^{\infty}} \right) \\
  % & = \ln \left( R_1 - \gamma_1^n \right) + \ln R_1  + S_{31} \ln (\delta) + S_{41} \ln (c_4^*) + Q_1, \\
  & \geq \ln B_4^* + \ln B_4^* + S_{31} \ln A^* + S_{32} \ln \delta + Q_1 , 
  \end{aligned}
\end{equation}
so that $\delta$ could be chosen significantly small satisfying $ \frac{\pp J^n}{\pp R_1} |_{(R_1^*, R_2^*)} > 0$.  Combining all these arguments, we conclude that a minimization point cannot occur at either $c_3^* = \delta$ or $c_4^* = \delta$, provided that $\delta$ is sufficiently small, in the case of $S_{31} < 0$,  $S_{32} < 0$,  $S_{41} < 0$, $S_{42} >0$. 
  
Due to the symmetry, the following cases (shown in Fig. \ref{admissible_set}(c)) could be analyzed in a similar manner:
\begin{itemize}
  \item $S_{31} < 0, S_{32} > 0, S_{41} < 0, S_{42} < 0$
  \item $S_{31} > 0, S_{32} < 0, S_{41} < 0, S_{42} < 0$
  \item $S_{31} < 0, S_{32} < 0, S_{41} > 0, S_{42} < 0$
\end{itemize}

% (iii) $S_{31} < 0$,  $S_{32} < 0$,  $S_{41} > 0$, $S_{42} <0$ \, (iv) $S_{11} < 0$,  $S_{12} > 0$,  $S_{21} < 0$, $S_{22} <0$ \, (v) $S_{11} > 0$,  $S_{12} < 0$,  $S_{21} < 0$, $S_{22} <0$. In these cases, a minimization point cannot occur at wither $c_3^* = \delta$ or $c_4^* = \delta$, provided that $\delta$ is sufficiently small. The technical details are left to interested readers. 

\noindent {\bf Case 3.} \,  $S_{31} < 0$,  $S_{32} > 0$,  $S_{41} < 0$, $S_{42} >0$, which corresponds to Figure \ref{admissible_set}(f).  
% This is case shown in Fig. \ref{admissible_set} (c). %  {\color{red}In this case, no upper bound of $R_1$ and $R_2$}. 

% We first show that a minimization can not occur on the boundaries $\{ \Gamma_i \}_{i=1}^4$
If a minimization point occurs at $(R_1^*, R_2^*)$ with $c_4^*=  ( 1 + S_{41}  R_1^* + S_{42} R_2^* )  = \delta$, we see that $R_1^* \ge \frac{-1}{S_{41}} := B_4^*$ (since $S_{42} >0$). In turn, the following estimate could be derived: % a similar inequality could be derived as in~\eqref{Two-positive-12} 
\begin{eqnarray} 
 %\partial_{R_1} J \mid_{(R_1^*, R_2^*)}  
 \frac{\pp J^n}{\pp R_1}
   \ge  
   \ln B_4^*  + \ln B_4^*   
   +  S_{31} \ln A^*  
   +  S_{32} \ln \delta  + Q_1 .  
    \label{Two-positive-13} 
\end{eqnarray} 
Again, the value of $\ln B_4^*  + \ln B_4^*  +  S_{31} \ln A^*  +  S_{32} \ln \delta$ becomes a fixed constant with a fixed $\dt$, and we could always choose $\delta$ significantly small such that $\partial_{R_1} J \mid_{(R_1^*, R_2^*)}  > 0$, which makes a contradiction to the assumption that $J (R_1, R_2)$ reaches a minimization point at $(R_1^*, R_2^*)$ over $V_{\delta}$. 
Using similar arguments, a minimization point cannot occur at $(R_1^*, R_2^*)$ with $c_3^*=  1 + S_{31}  R_1^* + S_{32} R_2^*   = \delta$, either, in the case of $S_{31} < 0$,  $S_{32} > 0$,  $S_{41} < 0$, $S_{42} >0$, if $\delta$ is sufficiently small. Because of the symmetry, the case of $S_{31} > 0$,  $S_{32} < 0$,  $S_{41} > 0$, $S_{42} < 0$, as shown in Figure~\ref{admissible_set}(i),  could be analyzed in a similar style (by switching $R_1$ and $R_2$).

\noindent {\bf Case 5.} \,  $S_{31} < 0$,  $S_{32} > 0$,  $S_{41} > 0$, $S_{42} <0$, which corresponds to Figure~\ref{admissible_set}(g).  

If a minimization point occurs at $(R_1^*, R_2^*)$ with $c_3^*= ( 1 + S_{31}  R_1^* + S_{32} R_2^* )  = \delta$, we see that $R_1^* \ge \frac{-1}{S_{31}} := B_5^*$ (since $S_{31} >0$, $S_{32} < 0$). This in turn indicates that 
\begin{eqnarray} 
  \frac{\pp J^n}{\pp R_1} \mid_{(R_1^*, R_2^*)}   \ge   \ln B_5^*  + \ln B_5^*   +  S_{31} \ln A^*  +  S_{32} \ln \delta  + Q_1 .  
\end{eqnarray} 
Again, the uniform bound $c_4^* \le A^*$ has been applied in the derivation. We could always choose $\delta$ significantly small so that $\frac{\pp J^n}{\pp R_1} \mid_{(R_1^*, R_2^*)} > 0$, which makes a contradiction to the assumption that $J (R_1, R_2)$ reaches a minimization point at $(R_1^*, R_2^*)$ over $V_{\delta}$. 
Using similar arguments, a minimization point cannot occur at $(R_1^*, R_2^*)$ with $c_4^*=  1 + S_{41}  R_1^* + S_{42} R_2^*   = \delta$, either, due to the fact that $R_2^*$ is bounded from below. Due to the symmetry, the case of $S_{31} > 0$,  $S_{32} < 0$,  $S_{41} < 0$, $S_{42} > 0$, could be analyzed in a similar fashion. 
% in the case of $S_{31} < 0$,  $S_{32} > 0$,  $S_{41} > 0$, $S_{42} <0$, if $\delta$ is sufficiently small.   

%{\bf Case 7.} \,  $S_{11} < 0$,  $S_{12} > 0$,  $S_{21} < 0$, $S_{22} >0$.  

% This case could be similarly analyzed. In this case, a minimization point cannot occur at wither $c_3^* = \delta$ or $c_4^* = \delta$, provided that $\delta$ is sufficiently small. The technical details are left to interested readers. 

\noindent {\bf Case 6.} \,  $S_{31} > 0$,  $S_{32} > 0$. 

In this case, the boundary section $c_3^* = \alpha_3 ( 1 + S_{31} R_1^* + S_{32} R_2^*)  = \delta$ will never be reached, because of the fact that $R_1^* > 0$, $R_2^* >0$. In turn, the four boundary section constraint will be reduced to the three-boundary-section version, and the analysis in the previous cases could be recalled. 

\noindent {\bf Case 7.} \,  $S_{41} > 0$,  $S_{42} > 0$. 

Similarly, the boundary section $c_4^* =   1 + S_{41} R_1^* + S_{42} R_2^*  = \delta$ will never be reached in this case, since $R_1^* > 0$, $R_2^* >0$. Similarly, the four boundary section constraint will be reduced to the three-boundary-section version, and the analysis in the previous cases could be recalled. 

Therefore, a combination of all these cases have demonstrated that, if the minimizer of $J (R_1, R_2)$ could not occur at a boundary point of $V_{\delta}$ where either $c_3 = \delta$ or $c_4 = \delta$, which completes the proof.

% To this end, we need to consider the following eight cases:

% {\bf Case 1:} $S_{31} < 0, S_{32} < 0, S_{41} < 0, S_{42} < 0$

% Moreover, % since $\mathcal{F}[c]$ is a strictly convex function of ${\bm c}$ in $\mathbb{R}^N_{+}$ and ${\bm c}({\bm R})$ is linear with respect to ${\bm R}$, $J(\bm{R})$ is strictly convex over $\mathcal{V}^n$. 

% To show that solving the minimize problem can given a solution 

% is well-defined and 

% It is easy to show that $J(\bm{R})$ is strictly convex over $\mathcal{V}^n$.
%{\color{blue}
% Following the same argument, it is possible to establish the positivity-preserving, unique solvability, and energy stability.  However, the technique details are much more complicated then the single reaction case.} In next section, we show that the numerical scheme (\ref{scheme_Mul_R}) works by numerical experiments.

% With the positivity-preserving and unique solvability for the numerical scheme~\eqref{scheme_Mul_R}, we proceed into the statement of an unconditional energy stability. 

\section*{Acknowledgement} 
This work is partially supported by the National Science Foundation (USA) grants NSF DMS-1759536, NSF DMS-1950868 (C. Liu, Y. Wang), NSF DMS-2012669, DMS-2309548 (C. Wang). % and NSF DMS-1719854, DMS-2012634 (S.~Wise).  

% \section*{References}

% \section*{References}
%\bibliographystyle{plain}
\bibliographystyle{siam}
\bibliography{KCR}

\end{document}